\newcommand{\ise}{ize}
\newcommand{\C}{{\mathbb C}}
\newcommand{\Q}{{\mathbb Q}}
\newcommand{\Z}{{\mathbb Z}}
\newcommand{\Eone}{{E_1}}
\newcommand{\oneFone}{{_1\hspace*{-0.1em}F_1}}
\newcommand{\dup}{\,d}
\newcommand{\raisedot}{\raisebox{2pt}{$.$}}
\newcommand{\raisecomma}{\raisebox{2pt}{$,$}}
\newcommand{\half}{\frac{1}{2}}
\newcommand{\Had}{{\,\odot}}
\theoremstyle{plain}
\newtheorem{theorem}{Theorem}
\newtheorem{corollary}[theorem]{Corollary}
\newtheorem{lemma}[theorem]{Lemma}
\theoremstyle{definition}
\newtheorem{conjecture}[theorem]{Conjecture}
\theoremstyle{remark}
\newtheorem{remark}[theorem]{Remark}
\newcommand{\seqnum}[1]{\href{http://oeis.org/#1}{\underline{#1}}}
\begin{document}
\bibliographystyle{plain}
\title{A Conjectured Integer Sequence Arising
{From} the Exponential Integral}
\author{Richard P. Brent\\
Australian National University\\
Canberra, ACT 2601, Australia\\
\href{mailto:ei@rpbrent.com}{\tt ei@rpbrent.com}
\and
M. L. Glasser\\
Clarkson University\\
Potsdam, NY 136-99-5820, USA\\
\href{mailto:lglasser@clarkson.edu}{\tt lglasser@clarkson.edu}
\and
Anthony J. Guttmann\\
School of Mathematics and Statistics\\
The University of Melbourne\\
Vic.\ 3010, Australia\\
\href{mailto:guttmann@unimelb.edu.au}{\tt guttmann@unimelb.edu.au}
}

\date{}

\maketitle

\begin{abstract}
Let $f_0(z) = \exp(z/(1-z))$,
$f_1(z) = \exp(1/(1-z))E_1(1/(1-z))$,
where $E_1(x) = \int_x^\infty e^{-t}t^{-1}{\dup}t$.
Let $a_n = [z^n]f_0(z)$ and $b_n = [z^n]f_1(z)$ be the corresponding
Maclaurin series coefficients. We show that $a_n$ and $b_n$ may be
expressed in terms of confluent hypergeometric functions.

We consider the asymptotic behaviour of the sequences $(a_n)$ and $(b_n)$
as $n \to \infty$, showing that they are closely related, and proving a 
conjecture of Bruno Salvy regarding $(b_n)$.

Let $\rho_n = a_n b_n$, so
$\sum \rho_n z^n = (f_0\Had f_1)(z)$ 
is a Hadamard product.
We obtain an asymptotic expansion
$2n^{3/2}\rho_n \sim -\sum d_k n^{-k}$ as $n \to \infty$, 
where $d_k\in\Q$, $d_0=1$.
We conjecture that $2^{6k}d_k \in \Z$. 
This has been verified for $k \le 1000$.

\end{abstract}

\pagebreak[3]
\section{Introduction}				\label{sec:intro}
	
We consider two analytic functions,
\[f_0(z) := e^{z/(1-z)} = e^{-1}\,e^{1/(1-z)}\]
and
\[f_1(z) := e^x\Eone(x), 
\text{ where }
x := 1/(1-z)
\text{ and } 
\Eone(x) := \int_x^\infty \frac{e^{-t}}{t}{\dup}t.
\]
These functions are regular in the open disk $D = \{z\in\C: |z| < 1\}$.
We write their Maclaurin coefficients
as $a_n := [z^n]f_0(z)$ and $b_n = [z^n]f_1(z)$.
Thus, in the disk $D$, $f_0(z) = \sum_{n\ge 0} a_n z^n$ and
$f_1(z) = \sum_{n\ge 0} b_n z^n$.

The functions $f_0(z)$ and $f_1(z)$ satisfy the same third-order
linear differential equation with polynomial coefficients.
Thus, the sequences $(a_n)$ and $(b_n)$ are D-finite and satisfy
the same recurrence relation (for sufficiently large $n$).

There are several entries in the OEIS related to the rational 
sequence $(a_n)_{n\ge 0}$.
The numerators are OEIS \seqnum{A067764}, 
and the denominators are OEIS \seqnum{A067653}.
The integers $n!a_n$ are given by OEIS \seqnum{A000262} and,
with alternating signs, by OEIS \seqnum{A293125}.
The numbers $(b_n)_{n\ge 0}$
are unlikely to be rational.%
\footnote{In particular,
$b_0 = G$, where $G := e\Eone(1) \approx 0.596$ is the
Euler-Gompertz constant, whose decimal digits are given
by OEIS \seqnum{A073003}. 
We have $b_n = a_nG - a_n'$, where
$a_n' \in \Q$ and $a_n'$ satisfies essentially the same recurrence
as $a_n$, but with different initial conditions.
Clearly $b_n \in \Q$ if and only if $G \in \Q$.
All that is known is that at least one of $\gamma$ and $G$ is
irrational~\cite{Aptekarev,Rivoal}.}

The numbers $a_n$ and $b_n$ may be expressed in terms of confluent
hypergeometric functions.
If $M(a,b,z) = \oneFone(a;b;z) 
$
and $U(a,b,z)$ are standard solutions of Kummer's differential 
equation, then Lemmas~\ref{lem:anM}--\ref{lem:bnU} show that
$a_n = e^{-1}M(n+1,2,1)$ and $b_n = -\Gamma(n)U(n,0,1)$.

We are interested in the asymptotics of $a_n$ and $b_n$ for large $n$.
Perron~\cite{Perron14}, following Fej\'er~\cite{Fejer08},
showed that
\[a_n \sim \frac{e^{2\sqrt{n}}}{2n^{3/4}{\sqrt{\pi e}}}\,\raisedot
\]

Salvy\footnote{Bruno Salvy, email to A.~J.~Guttmann et al., May 28, 2018.}
conjectured that $b_n$ is of order
$e^{-2\sqrt{n}}n^{-3/4}$. We have verified this conjecture.  
In fact,
\[b_n \sim -\frac{\sqrt{\pi e}}{n^{3/4}e^{2\sqrt{n}}}\,\raisedot\]
A function of the form 
$f(n) = \exp(\alpha n^{\theta + o(1)})$ for $\alpha \ne 0$, 
$\theta \in (0,1)$, is called a \emph{stretched exponential}
in the physics/statistics literature 
(the term \emph{sub-exponential} is used in complexity theory).
Thus, $a_n$ and $b_n$ are stretched exponentials,
with $\alpha = \pm 2$ and $\theta = 1/2$.

The motivation for this paper stems from some enumeration problems in
algebraic combinatorics and mathematical physics. Many such problems
involve ordinary generating functions of power series 
\hbox{$A(x)=\sum_{n \ge 0} A_n x^n$} in which $A_n \sim c \mu^n n^g$. 
In such cases, assuming that $g$ is not a negative integer, one can write 
\[A(x) \sim c\,\Gamma(1+g)(1-\mu x)^{-(1+g)}\]
as $x \to 1/\mu$.
However, in recent years there have been a number of examples, such as
$Av(1324)$ pattern-avoiding permutations \cite{CGZ18}, interacting
partially-directed self-avoiding walks \cite{OPR93}, and Dyck paths
enumerated by maximum height \cite{G15}, in which the corresponding
generating function has coefficients behaving as
$B_n \sim c \mu^n \exp(\alpha n^\theta) n^g,$ with $\alpha < 0$. 
The question then arises as
to the asymptotic form of the generating function. The coefficients $b_n$
considered in this paper are of the form just described, with 
$\theta =1/2$, 
and the underlying generating function is found. Corresponding
results for other values of $\theta$ remain to be discussed.

Theorem~\ref{thm:ckdirect} gives complete asymptotic expansions
of $a_n$ and $b_n$.
These may be written as
\[a_n = \frac{F(n^{1/2})}{2n^{3/4}\sqrt{\pi e}}\;\text{ and }\;
  b_n = -\frac{\sqrt{\pi e}}{n^{3/4}}\,F(-n^{1/2}),
\]
where 
$F(x) \sim e^{2x}\sum_{k \ge 0}c_k x^{-k}$,
for certain constants $c_k\in\Q$, $c_0 = 1$.
The $c_k$ may be computed using Theorem~\ref{thm:ckdirect}
or Lemma~\ref{lemma:ck}.

The \emph{Hadamard product} 
$f_0\Had f_1$
of $f_0$ and $f_1$ is the analytic function
defined for $z\in D$ by
\[(f_0\Had f_1)(z) = \sum_{n\ge 0} a_n b_n z^n.\]
The asymptotic expansions of $a_n$ and $b_n$ imply an asymptotic
expansion for $\rho_n := a_n b_n$
of the form
\[
\rho_n \sim - \frac{1}{2n^{3/2}} \sum_{k \ge 0} d_k n^{-k}, 
\]
where $d_k \in \Q$, $d_0 = 1$ (see Corollary~\ref{cor:product}).

A \emph{dyadic rational} is a rational number of the form $p/q$, where
$q$ is a power of two. Let $Q_2 := \{j/2^k: j, k \in \Z\}$
denote the set of dyadic rationals.

We conjecture, from numerical evidence for $k \le 1000$, that $d_k \in Q_2$.
More precisely, defining $r_k := 2^{6k}d_k$, 
Conjecture~\ref{conj:rkinteger} is that $r_k \in\Z$.
Remark~\ref{remark:rkinteger} gives numerical evidence for a slightly
stronger conjecture.
In Theorem~\ref{thm:factorialRkk} we prove the weaker
(but still nontrivial) result that $k!r_k \in \Z$.

In Remark~\ref{remark:Bessel} we mention an analogous (easily proved) result
for modified Bessel functions, where the product $I_\nu(x)K_\nu(x)$ for
fixed $\nu\in\Z$ has an asymptotic expansion whose coefficients are in $\Q_2$.

The connection with confluent hypergeometric (Kummer) functions is discussed
in~\S\ref{sec:GG}, and asymptotic expansions for $a_n$ and $b_n$ are
considered in~\S\ref{sec:asymp_a_b}. In \S\ref{sec:Maclaurin} we mention
various recurrence relations, continued fractions, and closed-form
expressions related to $a_n$ and $b_n$.
Finally, in \S\S\ref{sec:Hadamard}--\ref{sec:dn_rec},
we consider Hadamard products
and discuss the conjecture mentioned above.

Some comments on notation: $f(x) \sim \sum_{k\ge 0}f_k x^{-k}$ 
means that the sum on the
right is an asymptotic series for $f(x)$ in the sense of Poincar\'e. Thus,
for any fixed $m > 0$,
$f(x) = \sum_{k=0}^{m-1} f_k x^{-k} + O(x^{-m})$ as $x \to \infty$.
The letters $j, k, m, n$ always denote integers (except for $n$ in
Remark~\ref{rem:abgen}).
The notation $(x)_n$ for $n \ge 0$
denotes the \emph{ascending factorial}
or \emph{Pochhammer symbol}, 
defined by $(x)_n := x(x+1)\cdots(x+n-1)$.

\section{Connection with hypergeometric functions} \label{sec:GG} 

The numbers $a_n$ and $b_n$ may be expressed in terms of confluent
hypergeometric functions (Kummer functions),
for which we refer to~\cite[\S13.2]{DLMF}.
If $M(a,b,z)$
and $U(a,b,z)$ are standard solutions $w(a,b,z)$ of Kummer's differential 
equation
$zw'' + (b-z)w' - aw = 0$,
then Lemmas~\ref{lem:anM}--\ref{lem:bnU} below
express $a_n$ and $b_n$ in terms of 
$M(n+1,2,1)$ and $U(n,0,1)$.

Kummer~\cite{Kummer} considered
\begin{equation}			\label{eq:KummerM}
M(a,b,z) = \oneFone(a;b;z) = 
  \sum_{k\ge 0} \frac{(a)_k\,z^k}{(b)_k\,k!}\,\raisecomma
\end{equation}
which is undefined if $b$ is zero or a negative integer.
In the case $a \ne b=0$, we can use the solution
\[
zM(a+1,2,z) = \lim_{b\to 0}\, \frac{b}{a}M(a,b,z).
\]
Tricomi~\cite{Tricomi} introduced the function $U(a,b,z)$
as a second (minimal) solution of Kummer's differential equation.
For our purposes it is convenient to use the integral
representation~\cite[(13.4.4)]{DLMF}
(valid for $\Re(a) > 0$, $\Re(z) > 0$)
\begin{equation}			\label{eq:NIST13.4.4}
U(a,b,z)= \frac{1}{\Gamma(a)}\int_0^\infty e^{-zt}
\, t^{a-1} \, (1+t)^{b-a-1} {\dup}t.
\end{equation}

We remark that the
functions $M$ and $U$ satisfy
recurrence relations, known as
``connection formulas''.  For example,
we mention~\cite[(13.3.1) and (13.3.7)]{DLMF},
both (essentially) due to Gauss (see
Erd\'elyi~\cite[\S6.4 and \S6.6]{Erdelyi}):
\begin{small}
\begin{align}		
\label{eq:Kummer13.3.1}
(b-a)M(a-1,b,z)+(2a-b+z)M(a,b,z)-aM(a+1,b,z) =&\; 0,\\ 
\label{eq:Kummer13.3.7}
U(a-1,b,z)+(b-2a-z)U(a,b,z)+a(a-b+1)U(a+1,b,z) =&\; 0.
\end{align}
\end{small}
\vspace*{-10pt}

Lemmas~\ref{lem:anM}--\ref{lem:bnU} express $a_n$ and $b_n$ in
terms of the Kummer functions $M$ and $U$, respectively.
Lemma~\ref{lem:anM} was stated, without proof, by Covo~\cite{Covo}.
\begin{lemma}				\label{lem:anM}
If $n\in\Z$, $n \ge 1$, and $a_n$ is as above, then 
\begin{equation}			\label{eq:anM}
a_n = e^{-1}M(n+1,2,1).
\end{equation}
\end{lemma}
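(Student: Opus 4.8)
The plan is to extract $[z^n]f_0(z)$ directly from the factored form $f_0(z)=e^{-1}e^{1/(1-z)}$ recorded in the introduction, and then to recognize the resulting coefficient as Kummer's $M$. First I would expand the outer exponential as a series in powers of $1/(1-z)$,
\[
e^{1/(1-z)} = \sum_{j\ge 0}\frac{1}{j!}\,(1-z)^{-j},
\]
a series that converges uniformly on compact subsets of the disk $D$, so that term-by-term coefficient extraction is legitimate.

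Next, for each $j\ge 1$ I would apply the negative binomial expansion $(1-z)^{-j}=\sum_{n\ge 0}\frac{(j)_n}{n!}z^n$; the $j=0$ term is the constant $1$ and contributes only to $a_0$, which is exactly why the statement is restricted to $n\ge 1$. Interchanging the two summations (justified by absolute convergence for $|z|<1$) and reading off the coefficient of $z^n$, I would obtain, for $n\ge 1$,
\[
[z^n]e^{1/(1-z)} = \frac{1}{n!}\sum_{j\ge 1}\frac{(j)_n}{j!}
 = \frac{1}{n!}\sum_{k\ge 0}\frac{(n+k)!}{k!\,(k+1)!}\raisecomma
\]
after the reindexing $j=k+1$ together with the identity $(k+1)_n=(n+k)!/k!$.

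It remains to match this against the defining series \eqref{eq:KummerM} for $M(n+1,2,1)$. Using $(n+1)_k=(n+k)!/n!$ and $(2)_k=(k+1)!$, the general term of $M(n+1,2,1)=\sum_{k\ge 0}\frac{(n+1)_k}{(2)_k\,k!}$ is precisely $\frac{(n+k)!}{n!\,(k+1)!\,k!}$, so the two sums agree term by term, and multiplying through by $e^{-1}$ then yields \eqref{eq:anM}. The only real obstacle is the bookkeeping in this final identification: one must line up the Pochhammer symbols $(n+1)_k$ and $(2)_k$ against the factorial ratio produced by the coefficient extraction, taking care that the shift $j=k+1$ is what converts the $(1-z)^{-j}$ expansion into the standard confluent hypergeometric normalization. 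No analytic difficulty arises beyond justifying the interchange of the two absolutely convergent sums.
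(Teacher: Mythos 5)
Your proof is correct, but it takes a genuinely different route from the paper's. The paper argues via recurrences: specializing the Gauss connection formula \eqref{eq:Kummer13.3.1} at $(a,b,z)=(n+1,2,1)$ shows that $\widetilde{a_n}:=e^{-1}M(n+1,2,1)$ satisfies the same three-term recurrence \eqref{eq:arec1} as $a_n$, and the identity then follows by checking the two initial values $n=1,2$ directly from the series \eqref{eq:KummerM}. You instead compute $[z^n]e^{1/(1-z)}$ head-on: expanding the exponential into $\sum_{j\ge 0}(1-z)^{-j}/j!$, applying the binomial series to each term, interchanging the double sum, and recognizing $\frac{1}{n!}\sum_{k\ge 0}\frac{(n+k)!}{k!\,(k+1)!}$ as $M(n+1,2,1)$ via $(n+1)_k=(n+k)!/n!$ and $(2)_k=(k+1)!$; your Pochhammer bookkeeping checks out, and the $j=0$ term indeed drops out for $n\ge 1$ (equivalently, $(0)_n=0$). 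What each buys: your argument is self-contained, needing only the series definition of $M$ and the binomial theorem, and it yields the identity for all $n\ge 1$ at once rather than by induction; the paper's argument is shorter given infrastructure it develops anyway (the recurrence \eqref{eq:arec1} coming from the ODE \eqref{eq:f0de}), and the same template transfers to the companion result for $b_n$ via \eqref{eq:Kummer13.3.7}, as noted in the remark following Lemma~\ref{lem:bnU}. Your analytic justification is also sound: the double series is dominated termwise by the one for $e^{1/(1-|z|)}$, so it converges absolutely for $|z|<1$, which licenses both the interchange of summations and the termwise coefficient extraction.
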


\begin{proof}
If we put $a=n+1$, $b=2$, and $z=1$ in the connection formula 
\eqref{eq:Kummer13.3.1},
we see that $\widetilde{a_n} := e^{-1}M(n+1,2,1)$ satisfies
the same recurrence \eqref{eq:arec1} as~$a_n$.
Thus, to show that $a_n = \widetilde{a_n}$ for all
$n \ge 1$, it is sufficient to show that $a_n = \widetilde{a_n}$ 
for $n \in \{1,2\}$.
Now
\[
\widetilde{a_1} = e^{-1}M(2,2,1) = e^{-1}\sum_{k\ge 0}\frac{(2)_k}{(2)_k\,k!}
	= 1 = a_1,
\]
and, similarly,
\[
\widetilde{a_2} = e^{-1}M(3,2,1) = e^{-1}\sum_{k\ge 0}\frac{(3)_k}{(2)_k\,k!}
	= e^{-1}\sum_{k \ge 0}\frac{k+2}{2\,k!} = 3/2 = a_2,
\]
so the result follows.
\end{proof}

\begin{lemma}				\label{lem:bnU}
If $n\in\Z$, $n \ge 1$, and $b_n$ is as above, then
\begin{equation}			\label{eq:bnU}
b_n = -\Gamma(n)\, U(n,0,1).
\end{equation}
\end{lemma}

\begin{proof}
We start with~\cite[(6.7.1)]{DLMF}:
\[
I(a,b):=
 \int_0^\infty \frac{ e^{-at}}{t+b}{\dup}t = e^{ab}E_1(ab),\,\,\,a,b > 0.\]
Note that, by definition, 
$b_n = [z^n]I(1,1/(1-z))$.
Setting $a=1$, $b=1/(1-z),$ the term $1/(t+b)$ inside the integral
can be rearranged 
as follows: 
\begin{equation*}
\left ( t+\frac{1}{1-z} \right)^{-1} = \frac{1-z}{1+t-tz}
=\frac{1}{1+t}-\frac{1}{t(1+t)}\left(\frac{1}{1-{zt}/(1+t)} -1 \right),
\end{equation*}
and making the substitution $s=t/(1+t)$ gives  
\[I(1,1/(1-z))= \int_0^\infty
\frac{e^{-t}}{1+t}{\dup}t - 
 \int_0^1 e^{-s/(1-s)}\, \left(\frac{z}{1-zs}\right){\dup}s
 = \sum_{n \ge 0} b_n \, z^n.\] 
Thus, $b_0=eE_1(1)$ and, for $n > 0$,
\begin{equation}			\label{eq:Larrybn1}
b_n=-\int_0^1 e^{-s/(1-s)}\, s^{n-1}{\dup}s.
\end{equation}
Writing $e^{-s/(1-s)} = e^{1-1/(1-s)}$ gives, for $n > 0$,
\begin{equation}			\label{eq:Larrybn2}
b_n=-e \int_0^1 e^{-1/(1-s)}\, {s^{n-1}}{\dup}s.
\end{equation}
Substitute $t=s/(1-s)$ in~\eqref{eq:NIST13.4.4}, giving
\begin{equation}				\label{eq:GUabz}
\Gamma(a)U(a,b,z)= e^z\int_0^1 e^{-z/(1-s)} \, s^{a-1} \,
(1-s)^{-b}{\dup}s.
\end{equation}
Comparison of~\eqref{eq:Larrybn2} and~\eqref{eq:GUabz}
now gives $b_n = -\Gamma(n)\, U(n,0,1)$.
\end{proof}
\begin{remark}
We could prove Lemma~\ref{lem:bnU} in the same manner as
Lemma~\ref{lem:anM},
using the connection formula~\eqref{eq:Kummer13.3.7}
instead of~\eqref{eq:Kummer13.3.1},
and the recurrence~\eqref{eq:brec1} instead of~\eqref{eq:arec1},
but in order to verify the initial conditions we would have to resort to
some explicit representation for $U$, such as the integral
representation~\eqref{eq:NIST13.4.4}, so the proof would be no simpler.
\end{remark}

\begin{remark}					\label{rem:abgen}
We can general{\ise} our definitions of $a_n$ and $b_n$ to permit $n\in\C$,
using Lemmas~\ref{lem:anM}--\ref{lem:bnU}.
Such generalizations do not seem particularly useful,
so in what follows we continue to assume that $n\in\Z$.
\end{remark}

\section[Asymptotic expansions of a(n) and b(n)]
{Asymptotic expansions of $a_n$ and $b_n$}
\label{sec:asymp_a_b}

Theorem~\ref{thm:ckdirect} gives the complete asymptotic expansions of $a_n$
and $b_n$ in ascending powers of $n^{-1/2}$. Wright~\cite{Wright0} proved
the existence of an asymptotic expansion of the form~\eqref{eq:aseries2} for
$a_n$, but did not state an explicit formula or algorithm for computing the
constants $c_m$ occurring in the expansion.
For a more ``algorithmic'' approach, see Wyman~\cite{Wyman59}.

\begin{theorem}				\label{thm:ckdirect}
For positive integer $n$, if $a_n$ and $b_n$ are as above, then
\begin{equation}			\label{eq:aseries2}
a_n \sim \frac{e^{2\sqrt{n}}}{2n^{3/4}\sqrt{\pi e}}
	\sum_{m\ge 0} c_m n^{-m/2}
\end{equation}
and
\begin{equation}			\label{eq:bseries2}
b_n \sim -\,\frac{\sqrt{\pi e}}{n^{3/4}e^{2\sqrt{n}}}
	\sum_{m\ge 0} (-1)^m c_m n^{-m/2},
\end{equation}
where
\begin{equation}			\label{eq:ckdirect}
c_m = (-1)^m\sum_{j=0}^m \, [h^{m-j}]\exp(\mu(h))\;
	\frac{(m-2j+3/2)_{2j}}{4^j j!}
\end{equation}
and
\begin{equation}			\label{eq:mu2}
\mu(h) = h^{-1} - (e^h-1)^{-1} - {\textstyle\half}\,\raisedot
\end{equation}
\end{theorem}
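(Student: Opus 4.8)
The plan is to reduce both expansions to the large-argument asymptotics of modified Bessel functions, matching the growing solution $a_n$ against $I_{i+1}(2\sqrt n)$ and the decaying solution $b_n$ against $K_{i+1}(2\sqrt n)$. A single family of coefficients will arise from the Maclaurin series of $\exp(\mu(h))$, and the sign discrepancy between \eqref{eq:aseries2} and \eqref{eq:bseries2} will turn out to be exactly the sign discrepancy between the asymptotic expansions of $I_\nu$ and $K_\nu$, i.e.\ the reflection $\sqrt n\mapsto-\sqrt n$.

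First I would treat $b_n$, where everything is real. Starting from \eqref{eq:Larrybn1}, the substitution $s=e^{-u}$ gives $b_n=-\int_0^\infty\exp(-1/(e^u-1)-nu)\dup u$; then $u=h/\sqrt n$ together with \eqref{eq:mu2}, rewritten as $(e^u-1)^{-1}=u^{-1}-\half-\mu(u)$, yields the \emph{exact} identity
\[ b_n=-\frac{e^{1/2}}{\sqrt n}\int_0^\infty\exp\!\left(-\sqrt n\,(h+h^{-1})\right)\exp\!\left(\mu(h/\sqrt n)\right)\dup h. \]
The phase $h+h^{-1}$ attains its minimum $2$ at $h=1$, which already produces the factor $e^{-2\sqrt n}$ and locates the saddle at $u\sim n^{-1/2}$. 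Writing $\nu_i:=[h^i]\exp(\mu(h))$ and expanding $\exp(\mu(h/\sqrt n))=\sum_{i\ge0}\nu_i h^i n^{-i/2}$, each term integrates to a Bessel integral,
\[ \int_0^\infty\exp\!\left(-\sqrt n\,(h+h^{-1})\right)h^i\dup h=2K_{i+1}(2\sqrt n), \]
so that $b_n=-2e^{1/2}n^{-1/2}\sum_{i\ge0}\nu_i n^{-i/2}K_{i+1}(2\sqrt n)$. Substituting $K_\nu(z)\sim(\pi/2z)^{1/2}e^{-z}\sum_{k\ge0}a_k(\nu)z^{-k}$ with $z=2\sqrt n$ \cite[\S10.40]{DLMF} and collecting powers of $n^{-1/2}$ gives \eqref{eq:bseries2}, with the coefficient of $n^{-m/2}$ equal to $\sum_{i+k=m}\nu_i\,a_k(i+1)/2^k$.

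For $a_n$ I would run the parallel computation on the Cauchy integral $a_n=(2\pi i)^{-1}\oint e^{z/(1-z)}z^{-n-1}\dup z$: the same substitutions $z=e^{-u}$, $u=h/\sqrt n$ produce the phase $+\sqrt n(h+h^{-1})$ and the amplitude $\exp(-\mu(h/\sqrt n))$, whose coefficients are $[h^i]\exp(-\mu(h))=(-1)^i\nu_i$, and the steepest-descent loop evaluates $\oint e^{\sqrt n(h+h^{-1})}h^i\dup h$ to $2\pi i\,I_{i+1}(2\sqrt n)$ via the generating function $e^{\sqrt n(t+1/t)}=\sum_k I_k(2\sqrt n)t^k$ (using $I_{-i-1}=I_{i+1}$). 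Since the large-argument expansion of $I_\nu$ carries the extra factor $(-1)^k$ absent from that of $K_\nu$, the coefficient of $n^{-m/2}$ in \eqref{eq:aseries2} becomes $(-1)^m\sum_{i+k=m}\nu_i a_k(i+1)/2^k$. It remains to reconcile these with \eqref{eq:ckdirect}: using $a_k(\nu)=\prod_{l=1}^k(4\nu^2-(2l-1)^2)/(k!\,8^k)$, the symmetric factorization $(m-2j+3/2)_{2j}=\prod_{l=1}^j\big((m-j+1)^2-(2l-1)^2/4\big)$ gives $(m-2j+3/2)_{2j}/(4^j j!)=a_j(m-j+1)/2^j$, so that (writing $i=m-j$, $k=j$) the $b_n$ coefficient $\sum_{i+k=m}\nu_i a_k(i+1)/2^k$ equals $(-1)^m c_m$ and the $a_n$ coefficient equals $c_m$; in particular $c_0=1$.

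The main obstacle is rigour rather than algebra. The displayed identity for $b_n$ is exact, so only Laplace's method is needed, but the amplitude $\exp(\mu(h/\sqrt n))$ itself depends on $n$ and its power series converges only for $|h|<2\pi\sqrt n$ (the poles of $(e^x-1)^{-1}$ nearest $0$ being $x=\pm2\pi i$). One must therefore truncate at order $M$, bound the remainder uniformly on $[0,\infty)$, and verify that the contribution away from the window of width $\sim n^{-1/4}$ about $h=1$ is exponentially small; the substitution $h=e^w$, turning $h+h^{-1}$ into $2\cosh w$, reduces this to a standard application of Watson's lemma. The $a_n$ case requires the same analysis plus the steepest-descent justification: deforming the Cauchy contour through the saddle, fixing the orientation and overall sign of the loop integral (the leading term must reproduce Perron's $a_n\sim e^{2\sqrt n}/(2n^{3/4}\sqrt{\pi e})$), and discarding the exponentially subdominant second term in the asymptotics of $I_\nu$. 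Alternatively, once $b_n$ is established one may obtain $a_n$ as the second solution of the common recurrence, its expansion being the formal reflection $\sqrt n\mapsto-\sqrt n$ of that of $b_n$, with the remaining free constant fixed by Perron's leading term.
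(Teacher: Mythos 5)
Your proposal is correct in all its algebra and, suitably completed along the lines you sketch, would prove the theorem; but it is worth noting how it relates to the paper's own proof. The paper's proof has the same skeleton as yours — expand $a_n$ and $b_n$ in series of modified Bessel functions $I_{k+1}(2\sqrt n)$, $K_{k+1}(2\sqrt n)$ with coefficients $c_k'=[h^k]\exp(\mu(h))$, then substitute the large-argument expansions \eqref{eq:Iasymp}, \eqref{eq:Kasymp} and collect powers of $n^{-1/2}$ — but it obtains the Bessel-series step by \emph{citing} Temme's expansions of $U(a,b,z^2)$ and $M(a,b,z^2)$ for large $a$ \cite[(3.8)--(3.10), (3.29)]{Temme13}, applied to $U(n,0,1)$ and $M(n+1,2,1)$ via Lemmas~\ref{lem:bnU} and~\ref{lem:anM}. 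You instead rederive that step from scratch: your exact identity
\[
b_n=-\frac{e^{1/2}}{\sqrt n}\int_0^\infty
\exp\bigl(-\sqrt n\,(h+h^{-1})\bigr)\exp\bigl(\mu(h/\sqrt n)\bigr)\dup h,
\]
followed by termwise use of $\int_0^\infty e^{-\sqrt n(h+h^{-1})}h^i\dup h=2K_{i+1}(2\sqrt n)$, is precisely Temme's expansion of $U(n,0,1)$ in this special case (your $\nu_i$ are the paper's $c_i'$), and your closing identity $a_j(m-j+1)/2^j=(m-2j+3/2)_{2j}/(4^j j!)$ reproduces \eqref{eq:ckdirect} exactly, with the $(-1)^m$ and the reflection $\sqrt n\mapsto-\sqrt n$ arising, as in the paper, from the oddness of $\mu$ together with the sign $(-1)^j$ distinguishing \eqref{eq:Iasymp} from \eqref{eq:Kasymp}. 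What your route buys is self-containedness and transparency: the function $\mu$ is seen to be nothing but the correction term in $(e^u-1)^{-1}=u^{-1}-\half-\mu(u)$, and no appeal to \cite{Temme13} is needed. What it costs is that you must shoulder the uniform-asymptotics work yourself — truncating $\exp(\mu(h/\sqrt n))$ (whose series converges only for $|h|<2\pi\sqrt n$), bounding the remainder over all of $(0,\infty)$, and, for $a_n$, justifying the steepest-descent deformation of the Cauchy contour (the vertical segment of height $2\pi\sqrt n$ in the $h$-plane is a loop only on the cylinder, so relating it termwise to the Schl\"afli loop integral for $I_{i+1}$ needs an argument); these are exactly the points Temme's paper settles in general. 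Your fallback for $a_n$ — take the $b_n$ expansion as proved, observe that the common recurrence \eqref{eq:arec1} admits a one-dimensional space of formal stretched-exponential solutions for each sign of $\sqrt n$, and fix the dominant solution's constant by Perron's leading term — is also sound, but note that it needs Wright's theorem \cite{Wright0} (which the paper cites for exactly this purpose) to guarantee that $a_n$ actually possesses an asymptotic expansion of form \eqref{eq:aseries2} before coefficient matching is legitimate.
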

\begin{remark}
The function $\mu(h)$ defined by~\eqref{eq:mu2} could also be
defined using Bernoulli numbers, since
\begin{equation}			\label{eq:mu-Bernoulli}
\mu(h) = -\sum_{k=1}^\infty \frac{B_{2k}}{(2k)!} h^{2k-1}
 = -\frac{h}{12} + \frac{h^3}{720} - O(h^5).
\end{equation}
The function $\exp(\mu(h))$ occurring in~\eqref{eq:ckdirect} 
has the Maclaurin expansion
\begin{equation}			\label{eq:mu}
\exp(\mu(h)) = 1 - \frac{h}{12} + \frac{h^2}{288} 
		+ \frac{67h^3}{51840} + O(h^4).
\end{equation}
The numerators and denominators of the coefficients $[h^n]\exp(\mu(h))$  
have been added to the OEIS as \seqnum{A321937} and \seqnum{A321938},
respectively.
\end{remark}
\begin{proof}[Proof of Thm.~$\ref{thm:ckdirect}$]
We first prove~\eqref{eq:bseries2}. {From} Lemma~\ref{lem:bnU},
$b_n = -\Gamma(n)\, U(n,0,1)$.
Temme~\cite[Sec.~3]{Temme13} gives a general 
asymptotic result for $U(a,b,z^2)$ as $a \to \infty$. 
We state Temme's result for the case
$(a,b,z) = (n,0,1)$, 
which is what we need.
Let $c_k' := [h^k]\exp(\mu(h))$. (Temme uses $c_k$, but this conflicts with
our notation.)
{From} Temme~\cite[(3.8)--(3.10)]{Temme13}, we have
\begin{equation}				\label{eq:T3.8}
U(n,0,1) \sim \frac{\sqrt{e}}{\Gamma(n)}\sum_{k\ge 0} c_k'\Phi_k(n),
\end{equation}
where
\[\Phi_k(n) = 2n^{-(k+1)/2}K_{k+1}(2n^{1/2}),\]
and $K_\nu$ denotes the usual modified Bessel function.

{From}~\cite[(10.40.2)]{DLMF},
$K_\nu(z)$ has an asymptotic expansion
\begin{equation}			\label{eq:Kasymp}
K_\nu(z) \sim e^{-z}\sqrt{\frac{\pi}{2z}}
		\sum_{j \ge 0} \frac{(\nu-j+1/2)_{2j}}{j!\,(2z)^j}
		\,\raisedot
\end{equation}
Setting $\nu = k$ and $z=2n^{1/2}$ in~\eqref{eq:Kasymp}, we obtain
\[
\Phi_{k-1}(n) = 2n^{-k/2}K_k(2n^{1/2})
	\sim \frac{\sqrt{\pi}e^{-2\sqrt{n}}}{n^{1/4}}
	  \sum_{j\ge 0}\frac{(k-j+1/2)_{2j}}{j!\,4^j\,n^{(j+k)/2}}
	\,\raisedot
\]
Substituting this expression into~\eqref{eq:T3.8},
and grouping like powers of $n$, we obtain
\[
b_n 
 = -\Gamma(n)\,U(n,0,1)
 \sim -\frac{\sqrt{\pi e}}{n^{3/4} e^{2\sqrt{n}}}
	\sum_{m \ge 0} \sum_{j=0}^m
	  \frac{c_{m-j}'\, (m-2j+3/2)_{2j}}{j!\,4^j\,n^{m/2}}
	\,\raisedot  
\]
Now, comparison with~\eqref{eq:bseries2} shows that
\[
(-1)^m c_m = \sum_{j=0}^m \frac{c_{m-j}'\, (m-2j+3/2)_{2j}}
	{j!\,4^j}\,\raisecomma
\]
which completes the proof of~\eqref{eq:bseries2}.

The proof of~\eqref{eq:aseries2} is similar. We use Lemma~\ref{lem:anM}
instead of Lemma~\ref{lem:bnU}, and Temme's asymptotic 
result~\cite[(3.29)]{Temme13} for
$M(a,b,z^2)$ as $a \to \infty$
instead of~\eqref{eq:T3.8}; the modified Bessel function
$I_\nu$ replaces $K_\nu$.
{From}~\cite[(10.40.1)]{DLMF},
$I_\nu(z)$ has an asymptotic expansion
\begin{equation}                        \label{eq:Iasymp}
I_\nu(z) \sim \frac{e^{z}}{\sqrt{2\pi z}}
                \sum_{j \ge 0} (-1)^j\, \frac{(\nu-j+1/2)_{2j}}{j!\,(2z)^j}
                \,\raisecomma
\end{equation}
which replaces~\eqref{eq:Kasymp}.
\end{proof}

Theorem~\ref{thm:ckdirect} gives an expression for $c_m$ which (indirectly)
involves Bernoulli numbers, in view of~\eqref{eq:mu-Bernoulli}. 
Lemma~\ref{lemma:ck} gives a different expression
for $c_m$ that is recursive, as the expression for $c_m$ depends on the
values of $c_j$ for $j < m$, but has the advantage of avoiding reference to 
Bernoulli numbers. The idea of the proof is similar to that used
in the ``method of Frobenius''~\cite{Frobenius}.
We omit the details, which may be found
in~\cite[pp.~10--11]{BGG-arXiv}.

\begin{lemma}					\label{lemma:ck}
We have $c_0 = 1$ and, for all $m \ge 1$,
\begin{equation}		\label{eq:ckrecursion}
m c_m = [h^{m+3}]\,
	\sum_{j=0}^{m-1} c_j h^j\!
        \sum_{s \in \{\pm 1\}}
	(1+sh^2)^{\frac{1-2j}{4}}
	\exp\left(\frac{2}{h}\left((1+sh^2)^{\half}-1\right)\right).
\end{equation}
\end{lemma}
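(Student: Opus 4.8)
The plan is to argue in the spirit of the method of Frobenius: substitute the asymptotic form of $a_n$ supplied by Theorem~\ref{thm:ckdirect} into the exact three-term recurrence that $a_n$ satisfies, and then match coefficients of powers of $n^{-1/2}$. We work with $a_n$; since $b_n$ satisfies the same recurrence (as noted in the introduction) and shares the coefficients $c_m$, either sequence leads to the same recursion. First I would record the recurrence. Putting $a=n+1$, $b=2$, $z=1$ in the connection formula~\eqref{eq:Kummer13.3.1} and using Lemma~\ref{lem:anM} to replace $M(n+1,2,1)$ by $e\,a_n$ (and the shifts correspondingly) gives
\[
(n+1)a_{n+1} + (n-1)a_{n-1} = (2n+1)a_n\raisedot
\]
Into this I would substitute $a_n \sim \frac{1}{2\sqrt{\pi e}}\,e^{2\sqrt n}\sum_{m\ge 0}c_m\,n^{-(2m+3)/4}$ (the $n^{-3/4}$ normalisation absorbed into the series), treating the right-hand side as a formal expansion to be matched order by order.

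Next I would set $x=n^{1/2}$ and $h=1/x$, so that the shifts $n\mapsto n\pm1$ become $\sqrt{n\pm1}=x(1+sh^2)^{1/2}$ with $s=\pm1$. The exponential factor then contributes $\exp\!\big(2\sqrt{n+s}-2\sqrt n\big)=\exp\!\big(\frac{2}{h}((1+sh^2)^{1/2}-1)\big)$; the prefactors $n+1$, $n-1$, $2n+1$ become $x^2(1+h^2)$, $x^2(1-h^2)$, $x^2(2+h^2)$; and a typical summand of a shifted series $\sum_m c_m(n+s)^{-(2m+3)/4}$ reduces to $c_m\,h^m(1+sh^2)^{(1-2m)/4}$ once the factor $x^2$ from the prefactor is combined with the $x^{-(2m+3)/2}$ from the series and one writes $x^{-m}=h^m$. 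Dividing the whole recurrence by $\frac{1}{2\sqrt{\pi e}}\,e^{2x}x^{1/2}$ then collapses it to the single formal identity
\[
\sum_{j\ge 0}c_j h^j\sum_{s\in\{\pm1\}}(1+sh^2)^{\frac{1-2j}{4}}\exp\!\Big(\frac{2}{h}\big((1+sh^2)^{\half}-1\big)\Big)=(2+h^2)\sum_{j\ge 0}c_j h^j\raisedot
\]

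Writing $\Psi_j(h)$ for the inner sum over $s$, the heart of the matter is a short expansion of $\Psi_j$ through order $h^3$. Using $(1+sh^2)^{\alpha}=1+\alpha s h^2+O(h^4)$ with $\alpha=\frac{1-2j}{4}$, together with $\frac{2}{h}((1+sh^2)^{1/2}-1)=sh-\frac{h^3}{4}+O(h^5)$, and summing over $s=\pm1$ so that every term odd in $s$ cancels, I expect to obtain $\Psi_j(h)=2+h^2-j\,h^3+O(h^4)$. The vanishing of the $h^0,h^1,h^2$ parts of $\Psi_j-(2+h^2)$ is exactly the ``indicial'' matching that fixes the factors $e^{2\sqrt n}$ and $n^{-3/4}$, whereas the coefficient $[h^3]\Psi_j=-j$, which traces back to the exponent $\frac{1-2j}{4}$, is what yields the clean factor $m$ on the left of~\eqref{eq:ckrecursion}. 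Subtracting $(2+h^2)\sum_j c_j h^j$ leaves $\sum_{j\ge 0}c_j h^j(\Psi_j(h)-2-h^2)=0$, and extracting $[h^{m+3}]$ annihilates all $j>m$ (since $\Psi_j-2-h^2=O(h^3)$), contributes $c_m[h^3]\Psi_m=-mc_m$ from $j=m$, and for $j\le m-1$ permits reinstating the subtracted $2+h^2$ at no cost because $[h^{m+3-j}](2+h^2)=0$ when $m+3-j\ge4$. This is precisely $mc_m=[h^{m+3}]\sum_{j=0}^{m-1}c_j h^j\Psi_j(h)$, with $c_0=1$ inherited from Theorem~\ref{thm:ckdirect}.

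The one genuinely technical point, which I expect to be the main obstacle, is justifying the passage from the exact recurrence to the formal identity above. One must truncate the asymptotic series after $M$ terms, bound the remainder while accounting for the inflation by the polynomial coefficients $n\pm1$, $2n+1$ and by the bounded ratios of exponentials, and then use the fact that the truncated recurrence vanishes identically, together with the uniqueness of Poincar\'e expansions, to conclude that the coefficient of each $h^k$ with $k<M'$ vanishes; letting $M\to\infty$ recovers the full recursion.
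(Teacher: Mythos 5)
Your proposal is correct and takes essentially the same route as the paper: the paper's (omitted, Frobenius-style) proof likewise substitutes the expansion of Theorem~\ref{thm:ckdirect} into the three-term recurrence, sets $h=n^{-1/2}$ so that the shifts produce the factors $(1+sh^2)^{(1-2j)/4}$ and $\exp\bigl(\tfrac{2}{h}((1+sh^2)^{1/2}-1)\bigr)$, and matches coefficients of powers of $h$, with the key computation $[h^3]\Psi_j=-j$ producing the factor $m$ on the left of~\eqref{eq:ckrecursion}. Your expansions and the coefficient-extraction argument check out, so no correction is needed.
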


\begin{remark}
Computation using~\eqref{eq:ckdirect} and, as a
check, \eqref{eq:ckrecursion}, gives
\[ 
(c_k)_{k\ge 0} = \left(1, -\frac{5}{48}, 
	-\frac{479}{4608}, -\frac{15313}{3317760},
	\frac{710401}{127401984},
	-\frac{3532731539}{214035333120},
	 \ldots\right).\]
The numerators and denominators have been added to the OEIS
as \seqnum{A321939} and \seqnum{A321940}, respectively.
With the exception of $c_0$ and $c_4$, the $c_k$ all appear to
be negative.  This has been verified numerically for $k \le 1000$.
\end{remark}

\section[Maclaurin coefficients a(n) and b(n)]
{The Maclaurin coefficients $a_n$ and $b_n$}
\label{sec:Maclaurin}

The function $f_0(z)$ is the exponential generating function counting several
combinatorial objects, such as the number of ``sets of lists'',
i.e., the number of partitions of $\{1,2,\ldots,n\}$ into ordered subsets,
see Wallner~\cite[\S5.3]{Wallner}.

Observe that $f_0(z)$ satisfies the differential equation
\begin{equation}
(1-z)^2 f_0'(z) - f_0(z) = 0,		\label{eq:f0de}
\end{equation}
and from this it is easy to see that the $a_n$ satisfy a three-term recurrence
\begin{equation}			\label{eq:arec1}
na_n - (2n-1)a_{n-1} + (n-2)a_{n-2} = 0 \;\text{ for }\; n \ge 2.
\end{equation}
The initial conditions are $a_0 = a_1 = 1$.
Thus
\[(a_n)_{n\ge 0} = (1, 1, 3/2, 13/6, 73/24, 167/40, \ldots).\]
The recurrence~\eqref{eq:arec1} holds for $n \ge 0$ provided that we
define $a_n = 0$ for $n < 0$.
A closed-form expression, valid for $n \ge 1$ (but not for $n=0$), is
\[
a_n = \sum_{k=1}^n \frac{1}{k!}\,\binom{n-1}{k-1}.
\]

The constants $a_n$ may be expressed in terms of the general{\ise}d Laguerre
polynomials $L_n^{(\alpha)}(x)$
which, from~\cite[(18.12.13)]{DLMF}, have a generating function
\[\sum_{n\ge 0} z^n L_n^{(\alpha)}(x) = (1-z)^{-(\alpha+1)}e^{-xz/(1-z)}.\]
With $\alpha = x = -1$ we obtain
$\sum_{n\ge 0} z^n L_n^{(-1)}(-1) = e^{z/(1-z)}$, so
$a_n = L_n^{(-1)}(-1)$.

Using the chain rule and the definition of $f_1(z)$ in \S\ref{sec:intro},
we see that $f_1(z)$
satisfies the differential equation
\begin{equation}				\label{eq:f1de}
(1-z)^2 f_1'(z) - f_1(z) = z-1,
\end{equation}
which differs from~\eqref{eq:f0de} only in the right-hand side $z-1$.
Differentiating twice more with respect to $z$, 
we see that $f_0(z)$ and $f_1(z)$
both satisfy the same third-order differential equation
\[(1-z)^2f''' + (4z-5)f'' + 2f' = 0.\]

{From}~\eqref{eq:f1de}, the $b_n$ satisfy a recurrence
\begin{equation}			\label{eq:brec1}
nb_n - (2n-1)b_{n-1} + (n-2)b_{n-2} =
 \begin{cases}
 1, & \text{if $n=2$;}\\
 0, & \text{if $n \ge 3$.}
 \end{cases} 
\end{equation}
This is essentially
(i.e., for $n \ge 3$)
the same recurrence as~\eqref{eq:arec1}, but the initial conditions
$b_0 = G$, $b_1 = G-1$
are different. 
Here $G := e\Eone(1) \approx 0.596$ is the Euler-Gompertz
constant~\cite[\S2.5]{Lagarias}.

We remark that computation of the $b_n$ using the
recurrence~\eqref{eq:brec1} in the forward direction is
numerically unstable. A stable method of computation
is to use an adaptation of Miller's algorithm,
originally used to compute Bessel functions.
See Gautschi~\cite[\S3]{Gautschi} and Temme~\cite[\S4]{Temme75}.

As noted in \S\ref{sec:intro}, the $b_n$ may be expressed as
$a_nG-a_n'$, where $a_n$ is as above, and $a_n'$ satisfies essentially
the same recurrence with different initial conditions. In fact,
\[
na_n' - (2n-1)a_{n-1}' + (n-2)a_{n-2}' =
 \begin{cases}
 -1, & \text{if $n = 2$;}\\
  \phantom{-}0, &\text{if $n \ge 3$.}
 \end{cases}
\]
The initial conditions are $a_0' = 0$, $a_1' = 1$.
Thus
\[(a_n')_{n\ge 0} = (0, 1, 1, 4/3, 11/6, 5/2, 121/36, \ldots).\]

{From} \eqref{eq:bseries2}, $b_n \to 0$ as $n \to \infty$,
so the sequence $(a_n'/a_n)_{n\ge 1}$ is a convergent sequence of rational
approximations to~$G$. The sequence of approximants is
$(1, 2/3, 8/13, 44/73, 100/167, \ldots)$.

Bala~\cite{Bala} gives the continued fraction
\[1-G = 1/(3 - 2/(5 - 6/(7 - \cdots -n(n+1)/(2n+3) - \cdots))),\]
with convergents $1/3, 5/13, 28/73, 201/501$, etc.
The corresponding convergents to $G$ are
$2/3, 8/13, 45/73, 100/167$, etc.  We see that the $n$-th convergent
is just $a_{n+1}'/a_{n+1}$.
Theorem~\ref{thm:ckdirect} implies that
\[
G - a_n'/a_n = b_n/a_n \sim -2\pi e^{1-4\sqrt{n}} \text{ as } n \to \infty.
\]
We have contributed the sequence $(n!a_n')_{n\ge 1}$ to the OEIS as
\seqnum{A321942}.

\section[The Hadamard product of f0 and f1]
	{The Hadamard product of $f_0$ and $f_1$}	\label{sec:Hadamard}
	
Define $\rho_n := a_n b_n$. Thus $\sum_{n=0}^\infty \rho_n z^n$
is the Hadamard product $(f_0\Had f_1)(z)$.
{From} Lemmas~\ref{lem:anM}--\ref{lem:bnU}, we have
\[
\rho_n = -e^{-1}\Gamma(n)M(n+1,2,1)U(n,0,1).
\] 
Using Theorem~\ref{thm:ckdirect}, we can obtain a complete asymptotic expansion
for $\rho_n$ in decreasing powers of~$n$. This is given in
Corollary~\ref{cor:product}.

\begin{corollary}				\label{cor:product}
We have
\[
\rho_n \sim -\,\frac{1}{2n^{3/2}}\sum_{k\ge 0} d_k n^{-k},
\]
where
\[d_k = \sum_{j=0}^{2k}(-1)^j c_j c_{2k-j},\]
and $c_0,\ldots,c_{2k}$ are as in Theorem~$\ref{thm:ckdirect}$.
\end{corollary}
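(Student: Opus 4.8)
The plan is to multiply the two asymptotic expansions supplied by Theorem~\ref{thm:ckdirect} and read off the coefficients of the resulting series. Since $\rho_n = a_n b_n$ and the Poincar\'e asymptotic expansion of a product is the formal product of the individual expansions, I would start from
\[
a_n \sim \frac{e^{2\sqrt{n}}}{2n^{3/4}\sqrt{\pi e}}\sum_{m\ge 0} c_m n^{-m/2},
\qquad
b_n \sim -\,\frac{\sqrt{\pi e}}{n^{3/4}e^{2\sqrt{n}}}\sum_{m\ge 0} (-1)^m c_m n^{-m/2},
\]
and form the product. The exponential factors $e^{2\sqrt{n}}$ and $e^{-2\sqrt{n}}$ are exact, not asymptotic, so they cancel identically; likewise the constant $\sqrt{\pi e}$ cancels and the powers of $n$ combine as $n^{-3/4}\cdot n^{-3/4} = n^{-3/2}$. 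This leaves
\[
\rho_n \sim -\frac{1}{2n^{3/2}}
\left(\sum_{m\ge 0} c_m n^{-m/2}\right)\!\left(\sum_{\ell\ge 0} (-1)^\ell c_\ell n^{-\ell/2}\right).
\]

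The next step is to expand the Cauchy product of the two bracketed series in powers of $n^{-1/2}$. Collecting the coefficient of $n^{-p/2}$ gives $\sum_{j=0}^p c_j\,(-1)^{p-j}c_{p-j} = (-1)^p S_p$, where $S_p := \sum_{j=0}^p (-1)^j c_j c_{p-j}$. The key observation — and the only nonroutine point — is that $S_p$ vanishes for odd $p$. Reindexing the sum by $j \mapsto p-j$ gives $S_p = (-1)^p S_p$, so $S_p = 0$ whenever $p$ is odd. Hence only the even powers $p = 2k$ survive, and for these the coefficient of $n^{-k}$ is exactly $\sum_{j=0}^{2k}(-1)^j c_j c_{2k-j} = d_k$, which yields the stated expansion, with $d_0 = c_0^2 = 1 \in \Q$.

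The only points requiring care are the justification that the product of two Poincar\'e asymptotic series is the asymptotic series of the product, and the bookkeeping arising because the intermediate series run over half-integer powers $n^{-m/2}$ while the final series for $\rho_n$ is in integer powers $n^{-k}$. Both points are standard, and the symmetry argument above is precisely what makes the odd-$p$ (half-integer) terms disappear, so that $\rho_n$ genuinely admits an expansion in powers of $n^{-1}$ with the rational coefficients $d_k$.
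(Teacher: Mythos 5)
Your proposal is correct and is exactly the argument the paper intends: Corollary~\ref{cor:product} is presented as an immediate consequence of Theorem~\ref{thm:ckdirect}, obtained by formally multiplying the two expansions \eqref{eq:aseries2} and \eqref{eq:bseries2}, with the prefactors cancelling and the coefficients combining into $d_k$. Your symmetry observation that $S_p = \sum_{j=0}^p (-1)^j c_j c_{p-j}$ vanishes for odd $p$ is precisely the point that makes the half-integer powers disappear and justifies the expansion in integer powers of $n^{-1}$, so nothing is missing.
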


A computation shows that
\[(d_k)_{k\ge 0} = (1, -7/32, 43/2048, -915/65536, \ldots).\]
We observe that the $d_k$ appear to be dyadic rationals
More precisely, it appears that $2^{6k}d_k \in \Z$.
Define a scaled sequence $(r_k)_{k\ge 0}$ by
$r_k := 2^{6k}d_k$. Computation gives
\[
(r_k)_{k\ge 0} = (1, -14, 86, -3660 , -1042202, -247948260,
		-108448540420, \ldots).
\]
This leads naturally to the following conjecture.

\pagebreak[3]

\begin{conjecture}			\label{conj:rkinteger}
For all $k \ge 0$, $r_k\in\Z$. 
\end{conjecture}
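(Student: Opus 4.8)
\section*{Proof proposal for Conjecture~\ref{conj:rkinteger}}

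The plan is to reduce the statement to a $p$-adic integrality question about a single explicit recurrence for the $d_k$, and then to control the relevant valuations. Since $f_0$ and $f_1$ are D-finite, their Hadamard product $f_0\Had f_1=\sum\rho_n z^n$ is D-finite as well; concretely, because $(a_n)$ and $(b_n)$ both satisfy the second-order recurrence \eqref{eq:arec1} (resp.\ \eqref{eq:brec1}) for large $n$, the product $\rho_n=a_nb_n$ satisfies a linear recurrence $\sum_{i=0}^{3}p_i(n)\,\rho_{n-i}=0$ of order at most three, the symmetric (Hadamard) product of the two operators, whose coefficients $p_i$ are explicit polynomials in $n$ that may be taken with integer coefficients. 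The first step is therefore to write this recurrence down explicitly and to confirm that the algebraic (non-exponential) solution $\rho_n\sim n^{-3/2}\sum_k e_k n^{-k}$, with $e_k=-\tfrac12 d_k$, is the resonant combination in which the two stretched exponentials $e^{\pm 2\sqrt n}$ cancel.

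The second step is to substitute this ansatz into the recurrence. Expanding each $(n-i)^{-3/2-k+\cdots}$ in descending powers of $n$ introduces only binomial coefficients with half-integer upper argument, which are dyadic, while the $p_i$ contribute integers; matching coefficients of $n^{-3/2-k}$ then yields a triangular recurrence determining $d_k$ from $d_0,\dots,d_{k-1}$ with coefficients in $\Q$. Solving for $d_k$ requires dividing by the indicial factor attached to $d_k$, which is, up to a dyadic unit, of size $\Theta(k)$; this single observation already yields the weaker bound, since the accumulated divisions by $1,2,\dots,k$ are cleared by $k!$ and the powers of two by $2^{6k}$, giving $k!\,r_k\in\Z$ as in Theorem~\ref{thm:factorialRkk}. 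The genuinely delicate part is the sharp power of two: the plan here is to track $v_2(d_k)$ by strong induction, showing that each application of the recurrence lowers the $2$-adic valuation by at most $6$. The factors of the form $(k+\tfrac32)$, $(2k-1)$, etc.\ arising from the half-integer exponent must be accounted for exactly, and verifying that the worst case is precisely $-6$ per step, so that $v_2(d_k)\ge -6k$, is the crux of the power-of-two estimate.

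The main obstacle is the elimination of \emph{odd} primes from the denominators, which is exactly what separates the conjecture from the provable bound $k!\,r_k\in\Z$. At each step the division by the $\Theta(k)$-sized indicial factor can introduce an odd prime into the denominator of $d_k$, and the conjecture asserts that these are invariably cancelled by matching odd-prime factors in the numerator --- an arithmetic coincidence that the recurrence does not make manifest. A structural reason to expect the cancellation comes from the Bessel viewpoint underlying Remark~\ref{remark:Bessel}: writing $C(t):=\sum_m c_m t^m$ one checks from Corollary~\ref{cor:product} that $d_k=[t^{2k}]\,C(t)C(-t)$, and using \eqref{eq:ckdirect} one can resolve $C(t)$ and $C(-t)$ into the formal asymptotic series $\kappa_\nu$, $\iota_\nu$ of $K_\nu$, $I_\nu$, weighted by the Bernoulli-laden coefficients $c_m'=[h^m]\exp(\mu(h))$. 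The \emph{diagonal} contributions are products $\kappa_\nu\iota_\nu$, whose coefficients are the manifestly dyadic coefficients of $I_\nu(x)K_\nu(x)$; the conjecture is then equivalent to the assertion that, after weighting, the \emph{off-diagonal} terms $\kappa_\nu\iota_{\nu'}$ with $\nu\ne\nu'$ contribute only dyadic rationals. Proving this last cancellation --- presumably via a generating-function identity for $\sum_m c_m' x^m K_{m+1}$ that collapses the double sum, or via a $p$-adic argument showing $C(t)C(-t)\in\Z_p[[t^2]]$ for every odd $p$ despite $C(t)\notin\Z_p[[t]]$ --- is where I expect the real difficulty to lie.
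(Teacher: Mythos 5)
What you have written is a research plan, not a proof, and by your own admission the decisive steps are missing. That matches the status of the statement itself: it is an \emph{open conjecture} in the paper --- the authors verify it numerically for $k\le 1000$ and prove only the weaker Theorem~\ref{thm:factorialRkk}, that $k!\,r_k\in\Z$. The sound parts of your outline reproduce exactly that partial result, by essentially the paper's own route: your first two steps (the explicit order-three recurrence \eqref{eq:rho-rec1} for $\rho_n$, substitution of the ansatz $\rho_n\sim-\tfrac12 n^{-3/2}\sum_k d_k n^{-k}$, and the resulting triangular system with indicial factor $8k$) are the content of Lemma~\ref{lemma:dn_direct2} and Corollary~\ref{cor:explicit_dn2}, and your observation that clearing the accumulated divisions yields $k!\,r_k\in\Z$ is Theorem~\ref{thm:factorialRkk}. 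Your identity $d_k=[t^{2k}]\,C(t)C(-t)$ is correct, and the diagonal/off-diagonal Bessel decomposition is a genuine structural remark (cf.\ Remark~\ref{remark:Bessel}), but the assertion that the off-diagonal terms contribute only dyadic rationals is, as you concede, unproven --- and it is not visibly any easier than the conjecture itself.

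Moreover, the one step you present as within reach --- the $2$-adic induction showing $v_2(d_k)\ge -6k$ with ``worst case $-6$ per step'' --- fails as stated. Solving $8k\,d_k=\sum_{j<k}\alpha_{j,k}d_j$ for $d_k$ costs $v_2(8k)=3+v_2(k)$, which is unbounded (take $k$ a power of two), while the termwise bounds available from \eqref{eq:alphajk2}, namely $v_2\bigl((\tau)_{m+\delta}/(m+\delta)!\bigr)\ge -2(m+\delta)$, are too weak when $m=k-j$ is small; for $j=k-1$ and $k$ highly $2$-divisible, the hypothesis $v_2(d_j)\ge -6j$ does not propagate. Equivalently, in the integral form $k\,r_k=\sum_j\beta_{j,k}r_j$ of Corollary~\ref{cor:explicit_rk}, even granting $r_0,\dots,r_{k-1}\in\Z$ inductively, one still needs the congruence $\sum_j\beta_{j,k}r_j\equiv 0 \pmod{k}$ --- for the $2$-part of $k$ just as much as for its odd part. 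No termwise valuation estimate can deliver such a congruence; it requires cancellation across the sum, and that cancellation is the entire content of the conjecture. So the gap is not confined to odd primes, and the proposal does not close it at any prime.
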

The sequence of numerators of $r_k$ 
has been added to the OEIS as \seqnum{A321941}.
If Conjecture \ref{conj:rkinteger} holds, then the denominators
are all~$1$, i.e., the denominators are given by \seqnum{A000012}.

\begin{remark}				\label{remark:rkinteger}
Conjecture~\ref{conj:rkinteger} has been verified for all $k \le 1000$.
We also showed numerically, for $3 \le k \le 1000$,
that $r_k < 0$ and
$r_k \equiv \binom{2k}{k}$ 
(mod $32$).	
\end{remark}
\begin{remark}
A problem that is superficially
similar to our conjecture was solved by Tulyakov~\cite{Tulyakov}.
However, we do not see how to adapt his method
to prove our conjecture.
\end{remark}
\begin{remark}				\label{remark:Bessel}
Corollary~\ref{cor:product} is reminiscent of the result
\[
I_0(x) K_0(x) \sim \frac{1}{2x}\sum_{k\ge 0} e_{k,0}\, x^{-2k}
\]
in the theory of Bessel functions~\cite[(1.2)]{rpb256}.
The coefficients $e_{k,0}$ are given by
\[
e_{k,0} = \frac{(2k)!^3}{2^{6k}k!^4}\,\raisecomma
\]
so $2^{4k}e_{k,0}\in\Z$.
The modified Bessel functions $I_0(x)$ and
$K_0(x)$ are solutions of the same ordinary differential
equation
$xy'' + y' -xy = 0$,
but $I_0(x)$ increases with $x$ 
while $K_0(x)$ decreases.
This is analogous to the behaviour of $a_n$, which increases
as $n \to \infty$, and $|b_n|$, which decreases as $n \to \infty$.

More generally, from \cite[(10.40.6)]{DLMF},
we have
\[I_\nu(x)K_\nu(x) \sim \frac{1}{2x}\sum_{k\ge 0} e_{k,\nu}x^{-2k},\]
where
\[
e_{k,\nu} = (-1)^k 2^{-2k}(\nu-k+1/2)_{2k}\binom{2k}{k},
\]
and $2^{4k}e_{k,\nu}\in\Z$ for $\nu\in\Z$.
\end{remark}

\section[Other expressions for d(n)]
	{Other expressions for $d_n$}		\label{sec:dn_rec}

Since $(a_n)$
and $(b_n)$ are D-finite, it follows that $(\rho_n)$ is D-finite.%
\footnote{
See Flajolet and Sedgewick~\cite[Appendix B.4]{FS}, and
Stanley~\cite[Theorem~2.10]{Stanley},
for relevant background on D-finite sequences.}
In fact, $\rho_n$
satisfies the $4$-term recurrence
\begin{align}
\nonumber
n^2(n-1)(2n-3)\rho_n =& \;\;(n-1)(2n-1)(3n^2-5n+1)\rho_{n-1}\\
\nonumber
	&-(n-2)(2n-3)(3n^2-5n+1)\rho_{n-2}\\
	&+ (n-2)(n-3)^2(2n-1)\rho_{n-3}		\label{eq:rho-rec1}
\end{align}
for $n\ge 3$, with initial conditions 
$\rho_0 = G$, $\rho_1 = G-1$, $\rho_2 = (9G-6)/4$.

The recurrence~\eqref{eq:rho-rec1}
can be simplified by defining $\sigma_n := n\rho_n$. Then $\sigma_n$
satisfies the slightly simpler recurrence
\begin{align}
\nonumber
n(n-1)&(2n-3)\sigma_n = (2n-1)(3n^2-5n+1)\sigma_{n-1}\\
\label{eq:sigma-rec1}
	&-(2n-3)(3n^2-5n+1)\sigma_{n-2} + (n-2)(n-3)(2n-1)\sigma_{n-3}		
\end{align}
for $n\ge 3$, with initial conditions 
$\sigma_0 = 0$, $\sigma_1 = G-1$, $\sigma_2 = 9G/2-3$.
Also, Corollary~\ref{cor:product} gives an asymptotic series for $\sigma_n$:
\begin{equation}				\label{eq:sigma-asymp}
\sigma_n \sim -\,\frac{1}{2n^{1/2}}\sum_{k\ge 0} d_k n^{-k}.
\end{equation}

Using~\eqref{eq:sigma-rec1}, we can give a recursive algorithm for
computing the sequence $(d_n)$ (and hence $(r_n)$) directly, without
computing the sequence $(c_n)$.

\begin{lemma}				\label{lemma:dn_direct2}
We have $d_0 = 1$ and, for all $k \ge 1$,
\begin{align}
\nonumber
8kd_k = -\,[h^{k+2}]\, &\Bigg(\sum_{j=0}^{k-1}
	  d_j h^j \bigg(
	  B(h)(1-h)^{-(j+1/2)} \\
\label{eq:dn_direct1a}
	 &+ C(h)(1-2h)^{-(j+1/2)} + D(h)(1-3h)^{-(j+1/2)}\bigg)\Bigg),
\end{align}
where
\begin{align*}
B(h) &= -6 + 13h - 7h^2 + h^3 \;\;\;\, = -(2-h)(3-5h+h^2),\\
C(h) &= +6 - 19h + 17h^2 - 3h^3 = (2-3h)(3-5h+h^2), \;\text{ and }\\
D(h) &= -2 + 11h - 17h^2 + 6h^3 = -(1-2h)(1-3h)(2-h).
\end{align*}
\end{lemma}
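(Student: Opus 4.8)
The plan is to substitute the asymptotic expansion \eqref{eq:sigma-asymp} into the exact recurrence \eqref{eq:sigma-rec1} and match coefficients order by order, exactly as in the Frobenius-style argument behind Lemma~\ref{lemma:ck}. (The value $d_0 = 1$ is immediate from Corollary~\ref{cor:product}, since $c_0 = 1$; the substitution below produces the recursion for $k \ge 1$.) First I would set $h = 1/n$ and factor $n^3$ out of each polynomial coefficient in \eqref{eq:sigma-rec1}. A direct computation gives $n(n-1)(2n-3) = n^3(1-h)(2-3h)$, together with $(2n-1)(3n^2-5n+1) = -n^3B(h)$, $-(2n-3)(3n^2-5n+1) = -n^3C(h)$, and $(n-2)(n-3)(2n-1) = -n^3D(h)$; thus $B,C,D$ are precisely the rescaled and negated coefficients of $\sigma_{n-1},\sigma_{n-2},\sigma_{n-3}$, and the factor $3-5h+h^2$ appearing in $B$ and $C$ is just $n^{-2}(3n^2-5n+1)$. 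Dividing \eqref{eq:sigma-rec1} through by $n^3$ recasts it as
\[(1-h)(2-3h)\sigma_n + B(h)\sigma_{n-1} + C(h)\sigma_{n-2} + D(h)\sigma_{n-3} = 0.\]

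Next I would insert the expansion. Writing $\sigma_{n-i} \sim -\tfrac12\sum_{j\ge0}d_j(n-i)^{-j-1/2}$ and re-expanding $(n-i)^{-j-1/2} = n^{-j-1/2}(1-ih)^{-j-1/2}$, the common factor $-\tfrac12 n^{-1/2}$ cancels and one is left with the formal power-series identity
\[(1-h)(2-3h)S_0(h) + \sum_{j\ge0}d_j h^j G_j(h) = 0,\]
where $S_0(h) = \sum_{j\ge0}d_j h^j$ and $G_j(h) := B(h)(1-h)^{-(j+1/2)} + C(h)(1-2h)^{-(j+1/2)} + D(h)(1-3h)^{-(j+1/2)}$. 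The one delicate point is justifying this substitution: since the recurrence holds exactly for all large $n$ and each $\sigma_{n-i}$ has a Poincar\'e expansion, the left-hand side has an asymptotic expansion that is identically zero, so each of its coefficients vanishes, and the re-expansion of $(n-i)^{-j-1/2}$ in powers of $h$ is legitimate for such series.

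Finally I would extract $[h^{k+2}]$ for $k\ge1$. The contribution of the terms with $j\le k-1$ is exactly the right-hand side of \eqref{eq:dn_direct1a}, while the remaining terms, namely $j=k,k+1,k+2$ together with $(1-h)(2-3h)S_0 = (2-5h+3h^2)S_0$, contribute $2d_{k+2}-5d_{k+1}+3d_k + d_k[h^2]G_k + d_{k+1}[h^1]G_{k+1} + d_{k+2}[h^0]G_{k+2}$. The crux is three short Taylor computations at $h=0$, using $B(0)=-6$, $C(0)=6$, $D(0)=-2$ and the derivatives read off from the factored forms of $B,C,D$: one finds $[h^0]G_j = -2$, $[h^1]G_j = 5$ independent of $j$, and $[h^2]G_j = 8j-3$ (the coefficient of $(j+\tfrac12)(j+\tfrac32)$ cancelling). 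With these, the $d_{k+2}$ terms cancel ($2-2=0$), the $d_{k+1}$ terms cancel ($-5+5=0$), and the $d_k$ coefficient collapses to $3+(8k-3)=8k$, yielding $8kd_k = -[h^{k+2}]\sum_{j=0}^{k-1}d_jh^jG_j(h)$, which is \eqref{eq:dn_direct1a}.

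I expect the main obstacle to be not conceptual but purely this bookkeeping: verifying that $[h^1]G_j$ and $[h^2]G_j$ take exactly the values $5$ and $8j-3$ is what forces the recursion to close on $d_0,\dots,d_k$ and not to drag in $d_{k+1}$ or $d_{k+2}$. Geometrically, the vanishing of the $(j+\tfrac12)(j+\tfrac32)$ coefficient and the $j$-independence of $[h^1]G_j$ are exactly the two coincidences that make the derived relation a genuine recurrence for $d_k$ rather than an underdetermined one.
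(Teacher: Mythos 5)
Your proposal is correct and takes essentially the same route as the paper's proof: both substitute the expansion \eqref{eq:sigma-asymp} into the recurrence \eqref{eq:sigma-rec1}, rewrite the coefficients in terms of $h=1/n$ via $B$, $C$, $D$ (and $A(h)=(1-h)(2-3h)$), and extract $[h^{k+2}]$. Your computed values $[h^0]G_j=-2$, $[h^1]G_j=5$, $[h^2]G_j=8j-3$ are just a regrouping of the paper's key identities $A+B+C+D=-4h^2+O(h^3)$, $B+2C+3D=8h+O(h^2)$, $B+2^2C+3^2D=O(h)$, with the same resulting cancellation $-4+8(j+\tfrac12)=8j$.
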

\begin{proof}
Define $h := n^{-1}$, so $h \to 0$ as $n \to \infty$.
{From} Corollary~\ref{cor:product}, there exists an asymptotic series
of the form
\[-2\sigma_n \sim \sum_{j \ge 0} d_j n^{-j-1/2}\]
as $n \to \infty$.  Moreover, $d_0 = 1$.
Define $A(h) := (1-h)(2-3h)$ in addition to $B(h)$, $C(h)$ and $D(h)$.
Using the recurrence~\eqref{eq:sigma-rec1}
and the elementary identity 
$1/(n-m) = h/(1-mh)$ for $m \in \{0,1,2,3\}$, we have
\begin{align*}
\sum_{j\ge 0} d_j \Bigg(&A(h)h^{j+1/2}
	+ B(h)\left(\frac{h}{1-h}\right)^{j+1/2}\\
	+ &\;C(h)\left(\frac{h}{1-2h}\right)^{j+1/2}
	+ \;D(h)\left(\frac{h}{1-3h}\right)^{j+1/2}\Bigg) \sim 0.
\end{align*}
Now, dividing both sides by $h^{1/2}$, we obtain
\begin{align}
\nonumber
\sum_{j\ge 0}
	  d_j h^j \bigg(&A(h) + B(h)(1-h)^{-(j+1/2)} \\
\label{eq:dn_direct2}
	 &+ C(h)(1-2h)^{-(j+1/2)} + D(h)(1-3h)^{-(j+1/2)}\bigg) \sim 0.
\end{align}
An easy computation shows that
\begin{align*}
A(h) + B(h) + C(h) + D(h) &= -4h^2 + O(h^3),\\
       B(h) + 2C(h) + 3D(h) &= 8h + O(h^2),\;\text{ and}\\
       B(h) + 2^2C(h) + 3^2D(h) &= O(h).
\end{align*}
Thus, for all $j \ge 1$, the terms involving $d_j$ in~\eqref{eq:dn_direct2}
are $8jh^{j+2} + O(h^{j+3})$. 
(The ``$8j$'' arises from $-4 + 8(j+1/2) = 8j$.)
This shows that the choice of $d_k$
in~\eqref{eq:dn_direct1a} is necessary and sufficient to give an asymptotic
series of the required form.
Finally, we note that
$[h^{k+2-j}]A(h) = 0$, since $j \le k-1$ and $\deg(A(h)) = 2$.
Thus, a term involving $A(h)$ has been omitted from~\eqref{eq:dn_direct1a}.
\end{proof}

Using Lemma~\ref{lemma:dn_direct2}, we computed
the sequences $(d_n)$ and $(r_n)$ for $n \le 1000$, and verified
the values previously computed (more slowly) via Corollary~\ref{cor:product}.

Since the power series occurring in~\eqref{eq:dn_direct1a} have a simple form,
we can extract the coefficients of the required powers of $h$ 
to obtain a recurrence for the $d_k$, as in Corollary~\ref{cor:explicit_dn2}.
This gives a third way to compute the sequence $(d_n)$.

\begin{corollary}			\label{cor:explicit_dn2}
We have $d_0 = 1$ and, for all $k \ge 1$,
\[
8k\, d_k = \sum_{j=0}^{k-1} \alpha_{j,k}\,d_j.
\]
Here
\vspace*{-10pt}
\begin{align}
\nonumber
\alpha_{j,k} =&
 \;\;(-1 + 3\cdot 2^{m-1} - 2\cdot 3^m) (\tau)_{m-1}/(m-1)!\\
\nonumber
 &+ (7 - 17\cdot 2^m + 17\cdot 3^m) (\tau)_{m}/m!\\
\nonumber
 &+ (-13 + 38\cdot 2^m - 33\cdot 3^m) (\tau)_{m+1}/(m+1)!\\
\label{eq:alphajk2}
 &+ 6(1 - 4\cdot 2^m + 3\cdot 3^m) (\tau)_{m+2}/(m+2)!,
\end{align}
where $m:= k-j$ and $\tau := j+1/2$.
\end{corollary}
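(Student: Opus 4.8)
The plan is to obtain Corollary~\ref{cor:explicit_dn2} directly from Lemma~\ref{lemma:dn_direct2} by extracting, in closed form, the coefficient of $h$ appearing on the right-hand side of~\eqref{eq:dn_direct1a}. Matching the desired recurrence $8k\,d_k = \sum_{j=0}^{k-1}\alpha_{j,k}\,d_j$ against~\eqref{eq:dn_direct1a}, the factor $h^j$ multiplying the $j$-th summand merely lowers the extracted power of $h$ from $k+2$ to $k+2-j$, so that
\[
\alpha_{j,k} = -[h^{k+2-j}]\Bigl(B(h)(1-h)^{-(j+1/2)} + C(h)(1-2h)^{-(j+1/2)} + D(h)(1-3h)^{-(j+1/2)}\Bigr).
\]
Writing $m := k-j$ and $\tau := j+\half$ as in the statement, the problem reduces to computing $[h^{m+2}]$ of each of the three products. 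Since $0 \le j \le k-1$, we have $m \ge 1$, so every ascending-factorial symbol that will arise is well-defined.

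First I would invoke the formal binomial series $(1-ch)^{-\tau} = \sum_{p\ge 0}(\tau)_p\,c^p h^p/p!$ for $c\in\{1,2,3\}$. Each of $B(h), C(h), D(h)$ is a cubic, say $B(h)=\sum_{i=0}^3 b_i h^i$ and analogously for $C$ and $D$, so the extraction is immediate:
\[
[h^{m+2}]\,B(h)(1-h)^{-\tau} = \sum_{i=0}^3 b_i\,\frac{(\tau)_{m+2-i}}{(m+2-i)!}\raisecomma
\]
and the $C$- and $D$-terms are identical apart from an additional factor $2^{m+2-i}$, respectively $3^{m+2-i}$, on the $i$-th contribution. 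This produces twelve terms in all (four from each polynomial), which fall into exactly four groups according to the Pochhammer symbol $(\tau)_{m+2-i}$ they carry; as $i$ runs over $0,1,2,3$, these symbols are $(\tau)_{m+2}, (\tau)_{m+1}, (\tau)_{m}, (\tau)_{m-1}$.

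The remaining step is purely clerical: collect the twelve terms by Pochhammer symbol, pull out $2^m$ and $3^m$, and apply the overall minus sign from $\alpha_{j,k}$. Substituting the explicit coefficients of $B, C, D$ recorded in Lemma~\ref{lemma:dn_direct2}, the four groups reproduce the four lines of~\eqref{eq:alphajk2}; for instance, the coefficient of $(\tau)_{m+2}/(m+2)!$ is $-\bigl(-6 + 6\cdot 2^{m+2} - 2\cdot 3^{m+2}\bigr) = 6(1 - 4\cdot 2^m + 3\cdot 3^m)$, matching the last line. There is no genuine obstacle here; the result is a routine coefficient extraction, so the only real risk is a sign or index slip in aligning the shifted powers $m+2-i$ with their factorials. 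I would guard against this by checking the resulting $\alpha_{j,k}$ against the already tabulated values $(d_k)=(1,-7/32,43/2048,-915/65536,\ldots)$.
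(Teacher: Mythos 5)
Your proposal is correct and follows essentially the same route as the paper's own (sketched) proof: apply the binomial theorem to the series $(1-ch)^{-(j+1/2)}$, multiply by the cubics $B(h)$, $C(h)$, $D(h)$, and extract the coefficient of $h^{k+2-j}$ from~\eqref{eq:dn_direct1a}, collecting terms by Pochhammer symbol. Your sample check of the $(\tau)_{m+2}/(m+2)!$ coefficient is accurate, and the remaining three groups collect in the same way to give~\eqref{eq:alphajk2}.
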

\begin{proof}[Proof (sketch)]
To prove Corollary~\ref{cor:explicit_dn2}, we apply the binomial
theorem to the power series in~\eqref{eq:dn_direct1a}, multiply by the
polynomials $B(h)$, $C(h)$, and $D(h)$, and  extract
the coefficient of $h^{k+2-j}$.
\end{proof}

The following corollary is an easy deduction from
Corollary~\ref{cor:explicit_dn2}, and gives an explicit recurrence for
$r_k = 2^{6k}d_k$.

\begin{corollary}			\label{cor:explicit_rk}
We have $r_0 = 1$ and, for all $k \ge 1$,
\[k\,r_k = \sum_{j=0}^{k-1} \beta_{j,k}\,r_j,\;
\text{ where }\; \beta_{j,k} = 8^{2k-2j-1}\,\alpha_{j,k}\,.\]
\end{corollary}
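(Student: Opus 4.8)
The plan is to obtain the recurrence for $(r_k)$ by a direct change of variable in the recurrence of Corollary~\ref{cor:explicit_dn2}, since $r_k$ and $d_k$ differ only by a power of two. First I would record the identity $2^6 = 8^2$, so that the definition $r_k = 2^{6k}d_k$ can be rewritten as $r_k = 8^{2k}d_k$, equivalently $d_k = 8^{-2k}r_k$ and $d_j = 8^{-2j}r_j$. The base case is immediate: $r_0 = 8^0 d_0 = d_0 = 1$, in agreement with $d_0 = 1$ from Corollary~\ref{cor:explicit_dn2}.

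For each fixed $k \ge 1$ this is purely an algebraic rearrangement, requiring no induction. I would start from the recurrence
\[
8k\,d_k = \sum_{j=0}^{k-1}\alpha_{j,k}\,d_j
\]
supplied by Corollary~\ref{cor:explicit_dn2}, substitute $d_k = 8^{-2k}r_k$ on the left and $d_j = 8^{-2j}r_j$ on the right, and then multiply both sides by $8^{2k-1}$. The left-hand side becomes $8^{2k-1}\cdot 8k\cdot 8^{-2k}r_k = k\,r_k$, while the $j$-th term on the right becomes $8^{2k-1}\alpha_{j,k}\,8^{-2j}r_j = 8^{2k-2j-1}\alpha_{j,k}\,r_j$. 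Setting $\beta_{j,k} := 8^{2k-2j-1}\alpha_{j,k}$ then yields exactly the claimed formula $k\,r_k = \sum_{j=0}^{k-1}\beta_{j,k}\,r_j$, with the $\alpha_{j,k}$ read off from \eqref{eq:alphajk2}.

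There is essentially no obstacle here; the only point worth checking is that the exponent $2k-2j-1$ is a nonnegative integer throughout the summation range, which holds because $0 \le j \le k-1$ forces $2k-2j-1 \ge 1$. Thus each $\beta_{j,k}$ is an honest power of $8$ times the rational number $\alpha_{j,k}$, and no negative powers of two survive in the recurrence. The expressions for $\alpha_{j,k}$ in terms of $m := k-j$ and $\tau := j+1/2$ carry over unchanged, so the recurrence for $(r_k)$ follows immediately from that for $(d_k)$.
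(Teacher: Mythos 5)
Your proposal is correct and is exactly the ``easy deduction'' the paper has in mind: the paper states Corollary~\ref{cor:explicit_rk} without proof, as an immediate consequence of Corollary~\ref{cor:explicit_dn2} via the substitution $r_k = 2^{6k}d_k = 8^{2k}d_k$. Your algebra (multiplying the $d_k$-recurrence by $8^{2k-1}$) and your check that $2k-2j-1 \ge 1$ on the summation range are both right, so there is nothing to add.
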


Although we have not proved Conjecture~\ref{conj:rkinteger},
the following result goes part of the way.

\begin{theorem}				\label{thm:factorialRkk}
For all $k \ge 0$, we have $k!\,r_k \in \Z$. 
\end{theorem}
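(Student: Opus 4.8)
The plan is to prove the statement by strong induction on $k$, using the explicit recurrence of Corollary~\ref{cor:explicit_rk}, namely $k\,r_k=\sum_{j=0}^{k-1}\beta_{j,k}\,r_j$ with $\beta_{j,k}=8^{2k-2j-1}\alpha_{j,k}$. Writing $m:=k-j\ge 1$, so that $8^{2k-2j-1}=2^{6m-3}$, I would first recast the recurrence in a form adapted to the induction: multiplying by $(k-1)!$ gives
\[
k!\,r_k=\sum_{j=0}^{k-1}\frac{(k-1)!}{j!}\,\beta_{j,k}\,\bigl(j!\,r_j\bigr).
\]
Here $\frac{(k-1)!}{j!}\in\Z$ because $0\le j\le k-1$, and the inductive hypothesis supplies $j!\,r_j\in\Z$ for $j<k$ (the base case $0!\,r_0=1$ being immediate). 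Thus the whole theorem reduces to the single assertion that the coefficients $\beta_{j,k}$ are themselves integers; granting this, every summand is a product of integers and the result follows.

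So the heart of the argument is to prove $\beta_{j,k}=2^{6m-3}\alpha_{j,k}\in\Z$, which I would establish prime by prime. Recall from Corollary~\ref{cor:explicit_dn2} that $\alpha_{j,k}$ is an \emph{integer} linear combination of the four quantities $(\tau)_\ell/\ell!$ with $\tau=j+\half$ and $\ell\in\{m-1,m,m+1,m+2\}$. For an odd prime $p$, since $2^{6m-3}$ is a $p$-adic unit, it suffices to show each $(\tau)_\ell/\ell!$ is $p$-integral; in fact I claim each such term is a \emph{dyadic} rational. This follows from the generating-function identity $\sum_{\ell\ge0}\frac{(j+\half)_\ell}{\ell!}x^\ell=(1-x)^{-(j+\half)}=(1-x)^{-j}(1-x)^{-\half}$: the series $(1-x)^{-j}$ has integer coefficients, while $(1-x)^{-\half}=\sum_\ell\binom{2\ell}{\ell}4^{-\ell}x^\ell$ has dyadic coefficients, so their Cauchy product has dyadic coefficients. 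Hence $\alpha_{j,k}$, and therefore $\beta_{j,k}$, lies in $Q_2$ and has no odd prime in its denominator.

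It remains to control the prime $2$, and this I expect to be the main obstacle. The relevant estimate is a lower bound on the $2$-adic valuation $v_2(\alpha_{j,k})\ge 3-6m$, for then $v_2(\beta_{j,k})=(6m-3)+v_2(\alpha_{j,k})\ge0$. The building block is the exact valuation $v_2\bigl((\tau)_\ell/\ell!\bigr)=-2\ell+s_2(\ell)$, where $s_2(\ell)$ denotes the binary digit sum: indeed $(j+\half)_\ell=2^{-\ell}\prod_{i=0}^{\ell-1}(2j+1+2i)$ has an odd numerator, while $v_2(\ell!)=\ell-s_2(\ell)$ by Legendre's formula. Since $\ell$ ranges up to $m+2$ and the integer coefficients only raise valuations, the crude bound $v_2(\alpha_{j,k})\ge\min_\ell(-2\ell)=-2m-4$ already yields $v_2(\beta_{j,k})\ge(6m-3)-(2m+4)=4m-7\ge0$ for all $m\ge2$. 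The delicate case is $m=1$, where the crude bound falls short; there I would invoke the exact valuations of the four integer coefficients in \eqref{eq:alphajk2} (for $m=1$ these are $-4$, $24$, $-36$, $12$, with $2$-adic valuations $2,3,2,2$) together with the values of $s_2(\ell)$ to sharpen the estimate to $v_2(\alpha_{j,k})\ge-2\ge 3-6$, completing the bound.

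Combining the two local analyses gives $\beta_{j,k}\in\Z$ for all $0\le j<k$, and the induction then delivers $k!\,r_k\in\Z$. The one genuinely computational point—and the place where care is needed—is the $m=1$ valuation check, since there the factor $\tfrac{(k-1)!}{j!}=1$ contributes no extra divisibility and one must rely entirely on the cancellation encoded in the integer coefficients of $\alpha_{j,k}$. I would remark in passing that the argument in fact proves the stronger fact $\beta_{j,k}\in\Z$, so that the only genuine loss leading to $k!$ (rather than to Conjecture~\ref{conj:rkinteger}) is the repeated division by $k$ in the recurrence, which the induction absorbs into the factorial but cannot eliminate.
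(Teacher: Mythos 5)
Your proof is correct and takes essentially the same route as the paper's: induction via Corollary~\ref{cor:explicit_rk} reduced to the claim $\beta_{j,k}\in\Z$, established from the fact that $4^\ell(j+1/2)_\ell/\ell!\in\Z$, with the cases $m\ge 2$ handled by the crude power-of-two count ($6m-3\ge 2m+4$) and $m=1$ by an explicit check of the four terms. The only cosmetic differences are that you prove the dyadic-denominator fact via the generating function $(1-x)^{-(j+1/2)}$ where the paper simply cites ``elementary number theory,'' and that you phrase the bookkeeping in terms of $2$-adic valuations rather than denominators dividing powers of~$4$.
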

\begin{proof}
Let $R_k := k!r_k$.  We show that $R_k\in\Z$.
{From} Corollary~\ref{cor:explicit_rk},
$R_0 = 1$ and, for $k \ge 1$, $R_k$ satisfies the recurrence
\begin{equation}			\label{eq:Rkrec}
R_k = \sum_{j=0}^{k-1} \beta_{j,k}\,R_j\,\frac{(k-1)!}{j!}\,\raisedot
\end{equation}
The ratio of factorials in~\eqref{eq:Rkrec} is an integer, since 
$j \le k-1$. Thus, in order to prove the result by induction  on $k$,
it is sufficient to show that $\beta_{j,k} \in\Z$.
Now, elementary number theory shows that 
$4^\ell(j+1/2)_\ell/\ell! \in \Z$
for all $j,\ell \ge 0$.
Thus, the expressions of the form $(\tau)_{m+\delta}/(m+\delta)!$
in~\eqref{eq:alphajk2} are in $\Z$ provided that $m+\delta \ge 0$.
This is true as $m \ge k-j \ge 1$ and $\delta \ge -1$.
To show
that $\beta_{j,k}\in\Z$, it is sufficient to have $8^{2m-1} \ge 4^{m+2}$,
which holds for all $m \ge 2$. In the case $m=1$, it is easy to see
that all the terms in~\eqref{eq:alphajk2} are in~$\Z/4$,
so $\beta_{m-1,k} = 8\alpha_{m-1,k} \in\Z$.
Thus, $\beta_{j,k}\in\Z$ for $0 \le j < k$, 
and the result follows by induction on~$k$.
\end{proof}
\begin{remark}
The proof actually shows that $\beta_{j,k}\in 2\Z$, which implies
that $R_k \in 2\Z$ for all $k > 0$. 
\end{remark}

\section{Acknowledgments}

We thank Bruno Salvy for communicating his conjecture to one of us.
An anonymous referee made helpful suggestions regarding the exposition.
RPB was supported in part by ARC grant DP140101417.
AJG wishes to acknowledge support of the ARC Centre of Excellence for
Mathematical and Statistical Frontiers (ACEMS).

\pagebreak[3]

\pagebreak[3]

\bigskip
\hrule
\bigskip

\noindent 2010 {\it Mathematics Subject Classification}:
Primary 
34E05;			
Secondary 
11Y55,			
33C10,			
33C15, 			
33F99.			

\noindent \emph{Keywords: }
asymptotics,
confluent hypergeometric function,
D-finite,
Euler-Gompertz constant,
exponential integral,
Hadamard product,
holonomic,
Kummer function,
modified Bessel function,
stretched exponential.

\bigskip
\hrule
\bigskip

\noindent (Concerned with sequences
\seqnum{A000012},	
\seqnum{A000262},	
\seqnum{A067653},	
\seqnum{A067764},	
\seqnum{A073003},	
\seqnum{A201203},	
\seqnum{A293125},	
\seqnum{A321937},	
\seqnum{A321938},	
\seqnum{A321939},	
\seqnum{A321940},	
\seqnum{A321941},	
and			
\seqnum{A321942}.)	

\bigskip
\hrule
\bigskip

\vspace*{+.1in}
\noindent
\noindent

\end{document}